\newcommand\bigcheck[1]{#1 \raise1ex\hbox{$\hspace{-1ex}{}^\vee$}}
\newcommand\sucheck[1]{#1 \raise0.5ex\hbox{$\hspace{-1ex}{}^\vee$}}
\newtheorem{theorem}{Theorem}[section]
\newtheorem{corollary}[theorem]{Corollary}
\newtheorem*{lemma*}{Lemma}
\theoremstyle{definition}
\theoremstyle{remark}
\newtheorem{remark}[theorem]{Remark}
\newtheorem{example}[theorem]{Example}
\newcommand{\mc}[1]{{\mathcal #1}}
\newcommand{\mb}[1]{{\mathbb #1}}
\renewcommand{\tilde}{\widetilde}
\newcommand{\Mat}{\mathop{\rm Mat }}
\renewcommand{\ker}{\mathop{\rm Ker }}
\definecolor{light}{gray}{.9}
\begin{document}


\title{Some remarks on non-commutative principal ideal rings}


\author{
Sylvain Carpentier,
Alberto De Sole,
Victor G. Kac
}

\address{
Ecole Normale Superieure, Paris, France,
e-mail:  \rm \texttt{scarpent@clipper.ens.fr}
}

\address{
Dept. of Math., University of Rome 1, Italy
e-mail:  \rm \texttt{desole@mat.uniroma1.it}
}

\address{
Dept. of Math., MIT, USA
e-mail:  \rm \texttt{kac@math.mit.edu}
}

\maketitle


\noindent
{\bf Abstract:} We prove some algebraic results on the ring of matrix differential operators
over a differential field in the generality of non-commutative principal ideal rings.
These results are used in the theory of non-local Poisson structures.

\noindent 
{\bf R\'esum\'e:} Nous d\'emontrons quelques r\'esultats alg\'ebriques sur l'anneau des 
matrices op\'erateurs diff\'erentiels sur un corp diff\'erentiel 
dans la g\'en\'eralit\'e des anneaux non-commutatives principaux.
Ces r\'esultats sont utilis\'es dans la th\'eorie des structures de Poisson non-locales.

\section{Introduction}
\label{sec:intro}

In our previous two papers \cite{CDSK12a,CDSK12b}
we established some algebraic properties of the ring of matrix differential operators
over a differential field.
The problems naturally arose in the study of non-local Poisson structures \cite{DSK12a,DSK12b}.

Eventually we realized that the proofs of \cite{CDSK12b} can be simplified,
so that our results hold in the full generality of left and right principal ideal rings.

The new result which is not contained in our previous paper is Theorem \ref{20121124:thm2},
which is used in the theory of non-local Lenard-Magri scheme in \cite{DSK12b}.

We wish to thank Toby Stafford and Lance Small for very useful correspondence.
In particular, Toby Stafford provided us a proof of Theorem \ref{thm:list}.

\section{General facts about principal ideal rings}
\label{sec:2}

Let $R$ be a unital associative (not necessarily commutative) ring.
Recall that
a \emph{left} (resp. \emph{right}) \emph{ideal} of $R$
is an additive subgroup $I\subset R$ such that $RI=I$ (resp. $IR=I$).
The left (resp. right) \emph{principal} ideal generated by $a\in R$
is, by definition, $Ra$ (resp. $aR$).

Throughout the paper, we assume that the ring $R$ is both a left and a right
\emph{principal ideal ring},
meaning that every left ideal of $R$ and every right ideal of $R$ is principal.
\begin{example}\label{20121214:ex}
Let $\mc K$ be a differential field with a derivation $\partial$,
and let $\mc K[\partial]$ be the ring of differential operators over $\mc K$.
It is well known that $\mc K[\partial]$ is a left and right principal ideal domain, 
see e.g. \cite{CDSK12a}.
Let $\mc R=\Mat_{\ell\times\ell}(\mc K[\partial])$
be the ring of $\ell\times\ell$ matrices with coefficients in $\mc K[\partial]$.
By Theorem \ref{thm:list}(a) below, the ring $\mc R$ is a left and right principal ideal ring
as well.
Note also that $\mc K^\ell$ is naturally a left $\mc R$-module.
\end{example}

Given an element $a\in R$,
an element $d\in R$ is called a \emph{right} (resp. \emph{left}) \emph{divisor} of $a$
if $a=a_1d$ (resp. $a=da_1$) for some $a_1\in R$.
%
An element $m\in R$ is called a \emph{left} (resp. \emph{right}) \emph{multiple} of $a$
if $m=qa$ (resp. $m=aq$) for some $q\in R$.

Given elements $a,b\in R$,
their \emph{right} (resp. \emph{left}) \emph{greatest common divisor} 
is the generator $d$ of the \emph{left} (resp. \emph{right})
ideal generated by $a$ and $b$:
$Ra+Rb=Rd$ (resp. $aR+bR=dR$).
It is uniquely defined up to multiplication by an invertible element.
It follows that $d$ is a right (resp. left) divisor of both $a$ and $b$,
and we have the \emph{Bezout identity}
$d=ua+vb$ (resp. $d=au+bv$) for some $u,v\in R$.

Similarly, the \emph{left} (resp. \emph{right}) \emph{least common multiple}
of $a$ and $b$ is an element $m\in R$,
defined, uniquely up to multiplication by an invertible element,
as the generator of the intersection of the left (resp. right) principal ideals generated 
by $a$ and by $b$:
$Rm=Ra\cap Rb$
(resp. $mR=aR\cap bR$)..

We say that $a$ and $b$ are \emph{right} (resp. \emph{left}) \emph{coprime}
if their \emph{right} (resp. \emph{left}) greatest common divisor is $1$ (or invertible),
namely if the \emph{left} (resp. \emph{right}) ideal that they generate is the whole ring $R$:
$Ra+Rb=R$ (resp. $aR+bR=R$).
Clearly, this happens if and only if we have the Bezout identity $ua+vb=1$ (resp. $au+bv=1$)
for some $u,v\in R$.

An element $k\in R$ is called a \emph{right} (resp. \emph{left}) \emph{zero divisor} if
there exists $k_1\in R\backslash\{0\}$ such that $k_1k=0$ (resp. $kk_1=0$).
Note that, if $d$ is a right (resp. left) divisor of $a$,
and $d$ is a left (resp. right) zero divisor, then so is $a$.
In particular, if either $a$ or $b$ is not a left (resp. right) zero divisor,
then their right (resp. left) greatest common divisor $d$ is also
not a left (resp. right) zero divisor.
A non-zero element $b\in R$ is called \emph{regular} if it is neither a left nor a right zero divisor.

The following results summarize some important properties of principal ideal rings
that will be used in this paper.
Since a principal ideal ring is obviously Noetherian,
one can use the powerful theory of non-commutative Noetherian rings
(see \cite{MR01}).
\begin{theorem}\label{thm:list}
Let $R$ be a left and right principal ideal ring. Then:
\begin{enumerate}[(a)]
\item The ring $\Mat_{\ell\times\ell}(R)$ of $\ell\times\ell$ matrices with entries in $R$
is a left and right principal ideal ring.
\item
The sets of left and right zero divisors of $R$ coincide.
Hence, an element of $R$ is regular if and only if it is not a left (or a right)
zero divisor.
\item
The set of regular elements of $R$ satisfies the left (resp. right) Ore property:
for $a,b\in R$ with $b$ regular,
there exist $a_1,b_1\in R$, with $b_1$ regular, such that $ba_1=ab_1$
(resp. $a_1b=b_1a$).
\item
There exists the \emph{ring of fractions} $Q(R)$ containing $R$,
consisting of left fractions $ab^{-1}$
(or, equivalently, right fractions $b^{-1}a$),
with $a,b\in R$ and $b$ regular.
\item
Given $a,b\in R$ with $b$ regular,
there exists $q\in R$ such that $a+qb$ (resp. $a+bq$) is regular.
\item
Suppose that the ring $R$ contains a central regular element $r\in R$ such that $r-1$ 
is regular too.
Given $a_1,a_2,b_1,b_2\in R$ with $b_1,b_2$ regular,
there exists $q\in R$ such that $a_1+qb_1$ and $a_2+qb_2$ 
(resp. $a_1+b_1q$ and $a_2+b_2q$) are both regular.
\end{enumerate}
\end{theorem}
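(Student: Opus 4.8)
The plan is to dispatch parts (a)--(d) with standard Noetherian ring theory and to do the real work in (e) and (f), using throughout the classical structure theorem for principal ideal rings: $R$ is a finite direct product of \emph{prime} PIRs, each isomorphic to $\Mat_{n\times n}(\mathcal{O})$ for a (possibly non-commutative) principal ideal domain $\mathcal{O}$, together with \emph{Artinian} PIRs. For (a), under the Morita equivalence between left $R$-modules and left $\Mat_{\ell\times\ell}(R)$-modules the left ideals of $\Mat_{\ell\times\ell}(R)$ correspond to the $R$-submodules of the column module $R^\ell$, the principal ones to the submodules generated by at most $\ell$ elements; so it suffices to show every $R$-submodule $N\subseteq R^\ell$ needs $\le\ell$ generators, which follows by induction on $\ell$ (project $N$ onto the last coordinate, a principal left ideal $Rd$ of $R$, lift a generator of it into $N$, and adjoin $\le\ell-1$ generators of $N\cap R^{\ell-1}$). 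Right ideals are handled by passing to $R^{op}$.

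For (b)--(d): since $R$ is Noetherian its nilradical $\mathcal{N}$ is nilpotent, $R/\mathcal{N}$ is semiprime Noetherian, hence Goldie with semisimple Artinian classical quotient ring, and an element of $R$ is regular iff its image in $R/\mathcal{N}$ is (see \cite{MR01}); since in the semisimple quotient the left and the right zero divisors both coincide with the non-units, this lifts to give (b). For (c) and (d) I would use the structure theorem: on an Artinian PIR factor every regular element is a unit, so the Ore conditions are trivial and the factor is its own quotient ring, whereas a factor $\Mat_{n\times n}(\mathcal{O})$ has classical quotient ring $\Mat_{n\times n}(\mathbb{D})$, $\mathbb{D}=Q(\mathcal{O})$ being the skew field of fractions of $\mathcal{O}$, its regular elements being exactly those invertible over $\mathbb{D}$; assembling the factors yields (c), (d) and identifies $Q(R)$.

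For (e): a regular element of a direct product is regular in each factor, so it suffices to treat one factor. On an Artinian PIR $b$ is a unit and $q=(1-a)b^{-1}$ works. On $R=\Mat_{n\times n}(\mathcal{O})$ I argue row by row over $\mathbb{D}$: the $i$-th row of $a+qb$ equals $r_i(a)+r_i(q)\,b$, where $r_i(q)\in\mathcal{O}^{1\times n}$ may be chosen freely, and since $b$ is invertible over $\mathbb{D}$ the set $\{\,r_i(q)\,b: r_i(q)\in\mathcal{O}^{1\times n}\,\}$ is an additive subgroup of $\mathbb{D}^{1\times n}$ spanning it over $\mathbb{D}$ (it contains the rows of $b$). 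Choosing the rows of $q$ in turn, at the $j$-th step the rows already produced span a proper subspace $V\subsetneq\mathbb{D}^{1\times n}$, and since a coset of a $\mathbb{D}$-spanning additive subgroup cannot lie inside a proper subspace, we can keep the $j$-th row of $a+qb$ outside $V$; after $n$ steps $a+qb$ has $\mathbb{D}$-independent rows, hence is regular. The $a+bq$ version follows in $R^{op}$.

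For (f) I again reduce to one factor, the hypothesis on the central element $r$ passing to each factor. On an Artinian factor, multiplying on the right by $b_1^{-1},b_2^{-1}$ reduces the problem to finding $q$ with $q+d_1$ and $q+d_2$ both units, i.e.\ to writing $d_1-d_2$ as a difference of two units; this is where the hypothesis enters, since a semilocal ring with no homomorphic image $\cong\mathbb{F}_2$ has every element a sum — hence a difference — of two units, and a central $r$ with $r$ and $r-1$ both units rules out such an image. On a factor $R=\Mat_{n\times n}(\mathcal{O})$ I would refine the row-by-row argument: at the $j$-th step one must choose $r_j(q)$ keeping the $j$-th row of $a_1+qb_1$ out of a proper subspace $V_1$ \emph{and} the $j$-th row of $a_2+qb_2$ out of a proper subspace $V_2$; after moving $b_1^{-1},b_2^{-1}$ across, the bad choices of $r_j(q)$ are $\mathcal{O}^{1\times n}$ intersected with two proper affine $\mathbb{D}$-subspaces. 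The crux — which I expect to be the main obstacle — is the linear-algebra fact that $\mathcal{O}^{1\times n}$ is not contained in a union of two proper affine $\mathbb{D}$-subspaces. I would prove it using that $\mathcal{O}^{1\times n}$ is stable under scaling by $\mathcal{O}$ and that the hypothesis forces $|\mathcal{O}|\ge 3$: one of the two affine subspaces passes through $0$, hence is linear, say $V_1$; scaling a vector $w\in\mathcal{O}^{1\times n}\setminus V_1$ by two distinct non-zero scalars of $\mathcal{O}$ shows the second affine subspace is linear too and contains $w$; hence $\mathcal{O}^{1\times n}\subseteq V_1\cup V_2$ with both proper and linear, contradicting that $\mathcal{O}^{1\times n}$ is closed under addition and spans $\mathbb{D}^{1\times n}$. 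The $a_i+b_iq$ version again follows in $R^{op}$.
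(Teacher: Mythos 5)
Your proposal is essentially correct, but it takes a genuinely different route from the paper. The paper's proof (supplied by Stafford) is almost entirely a matter of citation: (a) is \cite[Prop.~3.4.10]{MR01}, (b)--(d) follow from the coarse decomposition $R=A\oplus B$ (Artinian $\oplus$ semiprime, \cite[Cor.~4.1.9]{MR01}) together with Goldie's and Ore's theorems, (e) is quoted from \cite[Cor.~2.5]{SS82}, and (f) rests on an unpublished communication \cite{Sta12}. You instead invoke the finer classical structure theorem (finite product of Artinian PIRs and of matrix rings $\Mat_{n\times n}(\mc O)$ over principal ideal domains) and then actually construct the required elements: the inductive ``$\le\ell$ generators for submodules of $R^\ell$'' argument for (a), the unit computation on Artinian factors, and the row-by-row avoidance argument over $\mb D=Q(\mc O)$ for (e) and (f). This buys a self-contained and quite transparent proof of exactly the two parts the paper cannot point to the literature for; the price is reliance on a heavier structure theorem than the $A\oplus B$ splitting the paper uses. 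Your key lemma for (f) --- that $\mc O^{1\times n}$ is not covered by two proper affine $\mb D$-subspaces once $|\mc O|\ge 3$ --- is sound as sketched (one affine subspace through $0$ is linear; two distinct nonzero scalars force the other to be linear as well; a group is not the union of two proper subgroups), and you correctly locate where the hypothesis on the central element $r$ enters, both here and in the Artinian ``sum of two units'' step, consistently with the paper's remark that $\mb Z/2\mb Z$ fails (f).

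One caveat on your part (b): the assertion that an element of $R$ is regular if and only if its image in $R/\mc N$ is regular is \emph{not} a general fact about Noetherian rings --- by Small's theorem it is equivalent to $R$ having an Artinian classical quotient ring, and it fails for general Noetherian rings --- so it should not be cited as standard. It does hold here, but you should derive it from the structure theorem you already have in hand (on an Artinian factor regular $=$ unit $=$ unit mod the radical; on a prime factor $\mc N=0$), or else argue factorwise as the paper does with $A\oplus B$, rather than routing through $R/\mc N$ for the whole ring.
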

\begin{proof}[Proof (by J.T. Stafford)]
Statement (a) is in \cite[Prop.3.4.10]{MR01}.
For part (b) \cite[Cor.4.1.9]{MR01}
shows that $R$ is a direct sum  $R=A \oplus B$ of an Artinian ring A 
and a Noetherian semiprime ring $B$.
Obviously the regular elements of $A$ are just the units.
By  Goldie's Theorem the right regular elements of 
$B$ are the same as the left regular elements, i.e. the regular elements 
(see \cite[Prop.2.3.4 and 2.3.5]{MR01}). 
Since an element $(a,b) \in R=A\oplus B$ is regular if and only if 
$a$ and $b$ are both regular  the same conclusion holds for $R$.
This proves (b)
The equivalence of (c) and (d)  is Ore's Theorem \cite[Thm.2.1.12]{MR01}.
Part (c)  then follows from Goldie's Theorem.
It is routine to see that the regular elements of $A\oplus B$ 
form an Ore set if this is true for both $A$ and $B$. 
Of course this result is vacuously true for $A$ while Goldie's Theorem does it for $B$.
Part (e) follows from \cite[Cor.2.5]{SS82}, and part (f) is in \cite{Sta12}.
\end{proof}
\begin{remark}\label{20121217:rem}
As T. Stafford pointed out, the ring $R=\mb Z/2\mb Z$ does not satisfy the property in part (f).
\end{remark}

\begin{remark}\label{20121217:rem}
From the above theorem we immediately get the following simple observations.
\begin{enumerate}[(a)]
\item
By Theorem \ref{thm:list}(b) we have that if $a=bc$, 
then $a$ is regular if and only if $b$ and $c$ are regular. 
In particular, any left or right divisor of a regular element is regular.
\item
If $b$ is regular and $a$ arbitrary, then 
we can write their right (resp. left) least common multiple
as $ab_1=ba_1$ with $b_1$ regular.
This follows from the Ore property in Theorem \ref{thm:list}(c).
Indeed, let $I=\{b'\in R\,|\,ab'\in bR\}$. It is clearly a right ideal of $R$.
Hence, $I=b_1R$ for some $b_1$.
Clearly, $m=ab_1$ is the right least common multiple of $a$ and $b$.
By the Ore property, there exists a regular element $\tilde b\in I$.
Hence, $\tilde b=b_1c$, and therefore $b_1$ is regular too.
\item
It follows from the above observation that,
if $a$ and $b$ are regular, so is their right (resp. left) least common multiple.
\item
If $a=a_1d$, $b=b_1d$, (resp. $a=da_1$, $b=db_1$), 
and $a_1$ and $b_1$ are right (resp. left) coprime,
then $d$ is the right (resp. left) greatest common divisor of $a$ and $b$.
Indeed, 
by the Bezout identity we have $ua_1+vb_1=1$ (resp. $a_1u+b_1v=1$),
which implies $ua+vb=d$ (resp. $au+bv=d$).
But then $Rd=Ra+Rb$ (resp. $dR=aR+bR$), proving the claim.
\item
Conversely,
if $a=a_1d$, $b=b_1d$, (resp. $a=da_1$, $b=db_1$), 
and $d$ is the right (resp. left) greatest common divisor of $a$ and $b$,
then, assuming that $d$ is regular, we get that $a_1$ and $b_1$ are right (resp. left) coprime.
Indeed,
by the Bezout identity we have $d=ua+vb=(ua_1+vb_1)d$
(resp. $d=au+bv=d(a_1u+b_1v)$),
and since by assumption $d$ is regular it follows that $ua_1+vb_1=1$.
\end{enumerate}
\end{remark}

\section{Some arithmetic properties of principal ideal rings}
\label{sec:3}

\begin{theorem}\label{20121122:prop}
Let $R$ be a left and right principal ideal ring and let $Q(R)$ be its ring of fractions.
Let $f=ab^{-1}=a_1b_1^{-1}\in Q(R)$
(resp. $f=b^{-1}a=b_1^{-1}a_1\in Q(R)$), 
with $a,a_1,b,b_1\in R$ and $b,b_1$ regular,
and assume that $a_1$ and $b_1$ are right (resp. left) coprime.
Then there exists a regular element $q\in R$
such that $a=a_1q$ and $b=b_1q$ (resp. $a=qa_1$ and $b=qb_1$).
\end{theorem}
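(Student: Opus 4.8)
The plan is to first use the Ore property to replace the two a priori unrelated denominators $b,b_1$ by a single common one, and then to exploit the right coprimality of $a_1,b_1$ through the Bezout identity.

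First I would apply the left Ore property (Theorem \ref{thm:list}(c)) to the pair $(b_1,b)$ — legitimate since $b$ is regular — to obtain $p,q\in R$ with $q$ regular and $bp=b_1q$, i.e. $b^{-1}b_1=pq^{-1}$ in $Q(R)$. I would then observe that $p$ is automatically regular: $b_1q$ is a product of regular elements, hence regular by Remark \ref{20121217:rem}(a), so $bp=b_1q$ is regular, whence $p$ is regular by the same remark. Multiplying the equality $a_1b_1^{-1}=ab^{-1}$ on the right by $b_1$ then gives $a_1=ab^{-1}b_1=apq^{-1}$, so that $a_1q=ap$ and $b_1q=bp$, with both $p$ and $q$ regular.

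The crux is the next step. Writing a Bezout identity $ua_1+vb_1=1$ (possible since $a_1,b_1$ are right coprime) and multiplying it on the right by $q$, I would substitute the two identities just obtained to get $q=ua_1q+vb_1q=(ua+vb)p$. Setting $w:=ua+vb$, this reads $q=wp$. Then $(a_1w-a)p=a_1q-ap=0$ and $(b_1w-b)p=b_1q-bp=0$; since $p$ is regular, hence not a right zero divisor, both give $a=a_1w$ and $b=b_1w$. Finally $w$ is regular, because $b=b_1w$ with $b$ regular (Remark \ref{20121217:rem}(a)), so $q:=w$ does the job. The right-fraction (``resp.'') case follows by applying the argument to the opposite ring $R^{\mathrm{op}}$, which is again a left and right principal ideal ring with the same regular elements and $Q(R^{\mathrm{op}})=Q(R)^{\mathrm{op}}$.

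I expect the main obstacle to be organizational rather than deep: one must resist trying to cancel $b$ against $b_1$ directly in $Q(R)$ and instead pass through the common-denominator identities $a_1q=ap$, $b_1q=bp$, and one must be scrupulous that every element produced along the way ($p$, and then $w$) is regular, since the final cancellations $(a_1w-a)p=0\Rightarrow a=a_1w$ and $(b_1w-b)p=0\Rightarrow b=b_1w$ rely on $p$ not being a right zero divisor. The slick point — multiplying $ua_1+vb_1=1$ by $q$ to land on $q=(ua+vb)p$ — is essentially the whole proof in miniature.
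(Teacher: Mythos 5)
Your proof is correct and its crux is identical to the paper's: the element $w=ua+vb$ built from the Bezout identity $ua_1+vb_1=1$ is exactly the $q$ that the paper produces. The only difference is that the paper verifies $a_1q=a$, $b_1q=b$ and the regularity of $q$ by a short direct computation in $Q(R)$ (namely $b_1q=b_1(ua_1b_1^{-1}+v)b=b$ and $q=b_1^{-1}b$ is invertible in $Q(R)$), so your detour through the Ore property and the cancellation of the auxiliary regular element $p$ is sound but unnecessary.
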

\begin{proof}
By assumption $a_1$ and $b_1$ are right coprime,
hence we have the Bezout identity 
$ua_1+vb_1=1$, for some $u,v\in R$.
Let $q=ua+vb$.
We have
$$
\begin{array}{l}
\vphantom{\Big(}
b_1q=b_1(ua+vb)=b_1(uab^{-1}+v)b
=b_1(ua_1b_1^{-1}+v)b
\\
\vphantom{\Big(}
=b_1(ua_1+vb_1)b_1^{-1}b=b\,,
\end{array}
$$
and
$$
a_1q=a_1b_1^{-1}b_1q=
a_1b_1^{-1}b=ab^{-1}b=a\,.
$$
Finally, $q$ is regular since $q=b_1^{-1}b$ is invertible in $Q(R)$.
\end{proof}
\begin{corollary}\label{20121122:cor}
For every $f\in Q(R)$ there is a ``minimal''
right (resp. left) fractional decomposition $f=ab^{-1}$ (resp. $f=b^{-1}a$)
with $a,b$ right (resp. left) coprime.
Any other right (resp. left) fractional decomposition 
is obtained from it by simultaneous multiplication of $a$ and $b$
on the right (resp. left) by some regular element $q\in R$.
\end{corollary}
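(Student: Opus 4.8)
The plan is to treat only the right-fraction statement, the left case being entirely symmetric, and to split the claim into \emph{existence} of a coprime decomposition and the \emph{classification} of all decompositions relative to it.

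For existence, I would start from any right fractional decomposition $f=a'b'^{-1}$ with $b'$ regular, which exists by Theorem~\ref{thm:list}(d). Let $d$ be a right greatest common divisor of $a'$ and $b'$, so that $Ra'+Rb'=Rd$, and write $a'=ad$, $b'=bd$ with $a,b\in R$. Since $b'=bd$ is regular, Remark~\ref{20121217:rem}(a) forces both $b$ and $d$ to be regular. Then $f=ad(bd)^{-1}=ab^{-1}$, and because $d$ is a \emph{regular} right gcd of $a'$ and $b'$, Remark~\ref{20121217:rem}(e) shows that $a$ and $b$ are right coprime. This $ab^{-1}$ is the desired minimal decomposition.

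For the classification, let $f=a_1b_1^{-1}$ be an arbitrary right fractional decomposition with $b_1$ regular. I would apply Theorem~\ref{20121122:prop} to the equality $f=a_1b_1^{-1}=ab^{-1}$, taking the \emph{coprime} pair there to be $(a,b)$: this produces a regular $q\in R$ with $a_1=aq$ and $b_1=bq$, which is exactly the asserted form. Conversely, for any regular $q$ the pair $(aq,bq)$ visibly gives a right fractional decomposition of $f$. Finally, if $a_1b_1^{-1}$ is itself coprime, applying Theorem~\ref{20121122:prop} in the other direction yields a regular $q'$ with $a=a_1q'$, $b=b_1q'$; then $b=bqq'$ and $b_1=b_1q'q$, with $b,b_1$ invertible in $Q(R)$, give $qq'=q'q=1$, so $q$ is invertible --- this is the uniqueness of the minimal decomposition up to an invertible factor.

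I do not anticipate a real obstacle: the corollary is a repackaging of Theorem~\ref{20121122:prop} together with the observation that dividing out the right gcd of a decomposition yields a coprime one. The only points that need a little care are verifying that the gcd one divides out is regular (this is precisely where regularity of $b'$ is used) and keeping track, when invoking Theorem~\ref{20121122:prop}, of which of the two numerators is required to be the coprime one.
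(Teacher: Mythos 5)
Your proof is correct and follows essentially the same route as the paper, which simply cites Remark~\ref{20121217:rem} and Theorem~\ref{20121122:prop} without writing out the details; you have supplied exactly those details (dividing out the right gcd, checking its regularity via Remark~\ref{20121217:rem}(a), coprimality of the quotients via part (e), and the classification via Theorem~\ref{20121122:prop} with the roles of the two decompositions correctly identified). No gaps.
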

\begin{proof}
It follows immediately from Remark \ref{20121217:rem}(d)
and Theorem \ref{20121122:prop}.
\end{proof}

\begin{theorem}\label{20121124:thm2}
Let $R$ be a left and right principal ideal ring,
and let $V$ be a left module over $R$.
Assume that the ring $R$ contains a central regular element $r\in R$ such that $r-1$ 
is regular too.
Let $a,b\in R$, with $b$ regular, be left coprime.
Let $m=ab_1=ba_1$ be their right least common multiple.
Then, for every $x,y\in V$ such that $ax=by$,
there exists $z\in V$ such that $x=b_1z$ and $y=a_1z$.
In particular, $aV\cap\ bV=mV$.
\end{theorem}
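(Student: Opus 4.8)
The plan is to reduce to the case where $a$ is regular, and then to obtain the statement as the left-exactness of the functor $\Hom_R(-,V)$ applied to a short exact sequence of free left $R$-modules of ranks $1,2,1$.

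For the reduction, I claim that replacing $a$ by $a+bq$, for an arbitrary $q\in R$, does not alter the problem. Since $bqs\in bR$ for every $s$, the right ideal $\{s\in R:as\in bR\}$ is unchanged by this substitution; hence $a+bq$ and $b$ are still left coprime, one may take the same $b_1$, and by Remark \ref{20121217:rem}(b) the right least common multiple becomes $\tilde m=(a+bq)b_1=m+bqb_1=b\tilde a_1$ with $\tilde a_1=a_1+qb_1$. If $ax=by$, set $\tilde y:=y+qx$, so that $(a+bq)x=b\tilde y$; then from a solution $x=b_1z$, $\tilde y=\tilde a_1z$ of the problem for $(a+bq,b)$ one gets $y=\tilde y-qx=(a_1+qb_1)z-q(b_1z)=a_1z$. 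So it is enough to prove the theorem assuming $a$ regular, and by Theorem \ref{thm:list}(e) (in its ``$a+bq$'' form; equivalently by Theorem \ref{thm:list}(f), which is where the central element $r$ of the hypothesis is used) we may choose $q$ with $a+bq$ regular. From now on $a$ — hence also $m=ab_1$ and $a_1=b^{-1}m$ — is regular.

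Now consider the complex of left $R$-modules
\[
0\longrightarrow R\xrightarrow{g}R^{2}\xrightarrow{f}R\longrightarrow 0,\qquad g(r)=(ra,-rb),\quad f(s,t)=sb_1+ta_1,
\]
which is a complex because $ab_1-ba_1=m-m=0$. I claim it is exact. The map $g$ is injective since $b$ is regular. For exactness at $R^{2}$: if $sb_1+ta_1=0$, then, working in $Q(R)$ where $a_1b_1^{-1}=b^{-1}a$ (from $ba_1=ab_1$), we get $s=-tb^{-1}a$; with $r:=-tb^{-1}$ one has $ra=s\in R$ and $rb=-t\in R$, and the Bezout relation $au+bv=1$ gives $r=(ra)u+(rb)v$, so $r\in R$ and $(s,t)=g(r)$. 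The surjectivity of $f$ — equivalently, the \emph{right} coprimality of $a_1$ and $b_1$ — is the crux of the argument. Here I would use Corollary \ref{20121122:cor}: write $b^{-1}a=pq^{-1}$ with $p,q$ right coprime and $q$ regular, so that $bp=aq\in aR\cap bR=mR$; since $(a_1,b_1)=(pc,qc)$ for some regular $c\in R$, the element $m=ab_1=(aq)c=(bp)c$ lies in $bpR$ while $bp$ lies in $mR$, so $mR=bpR$; writing $bp=me$ and using $m=bpc=mec$ with $m$ regular gives $ec=1$, hence $c$ is a unit (a regular element with a one-sided inverse is a unit), and therefore $Ra_1+Rb_1=(Rp+Rq)c=R$.

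Finally, apply the left-exact contravariant functor $\Hom_R(-,V)$ to the short exact sequence just obtained. Under $\Hom_R(R,V)\cong V$ and $\Hom_R(R^{2},V)\cong V^{2}$, and using that left multiplication by an element $c\in R$ on $V$ is $\Hom_R(-,V)$ of right multiplication by $c$ on $R$, the induced maps are $z\mapsto(b_1z,a_1z)$ and $(x,y)\mapsto ax-by$. Left exactness yields an exact sequence $0\to V\to V^{2}\to V$, whose exactness at $V^{2}$ is precisely the assertion that $ax=by$ implies $x=b_1z$ and $y=a_1z$ for some $z\in V$. The identity $aV\cap bV=mV$ is then immediate: $\supseteq$ follows from $m=ab_1=ba_1$, and if $w=ax=by$ then, with $z$ as above, $w=ab_1z=mz\in mV$. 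The step I expect to be the main obstacle is the surjectivity of $f$ (the right coprimality of the quotients $a_1,b_1$); the computation of $\Hom_R(-,V)$ on the two structure maps is routine but must be done with care, since $a,b,a_1,b_1$ need not be central in $R$.
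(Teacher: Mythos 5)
Your proof is correct, and its core is genuinely different from the paper's. The reduction to regular $a$ via $a\mapsto a+bq$ is identical to the paper's first step. From there the paper takes the Bezout identity $ub_1+va_1=1$ for the right-coprime pair $a_1,b_1$, invokes Theorem \ref{thm:list}(f) to make $u$ and $v$ simultaneously regular (this, not the reduction, is where the central regular element $r$ with $r-1$ regular is used --- Theorem \ref{thm:list}(e) suffices for the reduction and needs no such hypothesis, so your parenthetical attribution is slightly off), then inverts $u$ and $v$ in $Q(R)$ to read off the divisibility relations $1-a_1v=pb$, $a_1u=pa$, $1-b_1u=qa$, $b_1v=qb$ from minimality of $a^{-1}b$ and $b^{-1}a$, and finally exhibits $z=ux+vy$ explicitly. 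You instead encode the statement as exactness of $\Hom_R(-,V)$ applied to $R\xrightarrow{g}R^2\xrightarrow{f}R\to 0$; your two verifications are sound: exactness at $R^2$ uses the Bezout identity $au+bv=1$ for the left-coprime pair $a,b$ to pull the element $-tb^{-1}$ back into $R$, and surjectivity of $f$ (i.e.\ right coprimality of $a_1,b_1$, which the paper merely asserts) follows correctly from Theorem \ref{20121122:prop} together with $aR\cap bR=mR$ and regularity of $m$. What your route buys is notable: since you never invoke Theorem \ref{thm:list}(f), the hypothesis that $R$ contain a central regular $r$ with $r-1$ regular is not used anywhere in your argument, so you in fact prove the theorem without that assumption; the paper's route needs it only to regularize $u,v$ before inverting them, a step your functorial argument avoids entirely. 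The price is reliance on the uniqueness statement of Theorem \ref{20121122:prop} for the surjectivity of $f$, whereas the paper's computation, once the coprimality of $a_1,b_1$ is granted, is self-contained and produces the explicit witness $z=ux+vy$.
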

\begin{proof}
We first reduce to the case when $a$ is regular.
Indeed, let, by Theorem \ref{thm:list}(e), $q\in R$ be such that $a+bq$ is regular.
Then it is immediate to check that
the right least common multiple of $a+bq$ and $b$ is
$(a+bq)b_1=b(a_1+qb_1)$.
Moreover, since by assumption $ax=by$, we have $(a+bq)x=b(y+qx)$.
Therefore, assuming that the theorem holds for regular $a$,
there exists $z\in V$ such that
$x=b_1z$ and $y+qx=(a_1+qb_1)z$, which implies $y=a_1z$,
proving the claim.

Next, let us prove the theorem under the assumption that both $a$ and $b$ are regular.
Since $m=ab_1=ba_1$ is the right least common multiple of $a$ and $b$,
it follows that $a_1$ and $b_1$ are right coprime,
and therefore we have the Bezout identity
\begin{equation}\label{20121214:eq1}
ub_1+va_1=1\,,
\end{equation}
for some $u,v\in R$.
After replacing $u$ by $u+qa$ and $v$ by $v-qb$,
equation \eqref{20121214:eq1} still holds.
Hence, by Theorem \ref{thm:list}(f),
we can assume, without loss of generality, that $u$ and $v$ are both regular.
Moreover, by Remark \ref{20121217:rem}(c),
since by assumption both $a$ and $b$ are regular,
their right least common multiple is regular too,
and therefore $a_1$ and $b_1$ are regular too.
Multiplying in $Q(R)$ 
both sides of equation \eqref{20121214:eq1} on the left by $u^{-1}$
and on the right by $a_1^{-1}$, we get
\begin{equation}\label{20121214:eq2}
a^{-1}b=(a_1u)^{-1}(1-a_1v)\,,
\end{equation}
and similarly, multiplying \eqref{20121214:eq1} on the left by $v^{-1}$
and on the right by $b_1^{-1}$, we get
\begin{equation}\label{20121214:eq3}
b^{-1}a=(b_1v)^{-1}(1-b_1u)\,.
\end{equation}
Since, by assumption, $a$ and $b$ are left coprime,
both fractions $a^{-1}b$ and $b^{-1}a$ are in their minimal fractional decomposition.
Hence, by equations \eqref{20121214:eq2} and \eqref{20121214:eq3},
there exist $p,q\in R$ such that
\begin{eqnarray}
&& 1-a_1v=pb
\,\,,\,\,\,\,
a_1u=pa
\,,\label{20121218:eq1} \\
&& 1-b_1u=qa
\,\,,\,\,\,\,
b_1v=qb
\,.\label{20121218:eq2}
\end{eqnarray}
Applying the first equation in \eqref{20121218:eq1} to $y\in V$
and using the assumption $ax=by$ and the second equation of \eqref{20121218:eq1},
we get
$$
y=a_1vy+pby=a_1vy+pax=a_1(vy+ux)\,,
$$
and, similarly,
applying the first equation in \eqref{20121218:eq2} to $x\in V$
and using the second equation of \eqref{20121218:eq2},
we get
$$
x=b_1ux+qax=b_1ux+qby=b_1(ux+vy)\,.
$$
Hence, the statement holds with $z=ux+vy$.
\end{proof}

If $V$ is a left $R$-module and $a\in R$, we denote $\ker a=\big\{x\in V\,\big|\,ax=0\big\}$.
\begin{remark}\label{20121217:rem2}
If $d$ is the right greatest common divisor of $a$ and $b$ in $R$, 
then $\ker a\cap\ker b=\ker d$.
Indeed, by the Bezout identity we have $b_1a+a_1b=d$.
Therefore $\ker a\cap\ker b\subset\ker d$.
The reverse inclusion is obvious.
\end{remark}


\begin{corollary}\label{20121214:cor}
Let $R$ be as in Theorem \ref{20121124:thm2},
and let $V$ be a left $R$-module.
Let $\sigma:\, R\to R$ be an anti-automorphism of the ring $ R$.
Let $a,b\in R$, with $b$ regular, be right coprime,
and suppose that $\sigma(a)b=\epsilon\sigma(b)a$,
for some invertible central element $\epsilon\in R$.
Let $x,y\in V$ be such that $\sigma(a)x=\epsilon \sigma(b)y$.
Then there exists $z\in V$ such that $x=bz$ and $y=az$.
\end{corollary}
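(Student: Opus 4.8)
The plan is to reduce Corollary \ref{20121214:cor} to Theorem \ref{20121124:thm2} by applying the anti-automorphism $\sigma$ to turn the hypothesis $\sigma(a)b = \epsilon\sigma(b)a$ into a statement about a right least common multiple of the pair $(\sigma(a),\sigma(b))$, and then transporting the equation $\sigma(a)x = \epsilon\sigma(b)y$ into the framework of that theorem. First I would observe that since $\sigma$ is an anti-automorphism, $a,b$ right coprime means $\sigma(a),\sigma(b)$ are left coprime (applying $\sigma$ to a Bezout identity $au+bv=1$ gives $\sigma(u)\sigma(a)+\sigma(v)\sigma(b)=1$), and $b$ regular means $\sigma(b)$ regular by Remark \ref{20121217:rem}(a)-style reasoning (zero divisors go to zero divisors under an anti-automorphism). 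So the pair $(\sigma(a),\sigma(b))$ satisfies exactly the hypotheses on $(a,b)$ in Theorem \ref{20121124:thm2}, with the roles compatible.

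Next I would identify the right least common multiple of $\sigma(a)$ and $\sigma(b)$. The relation $\sigma(a)b = \epsilon\sigma(b)a = \sigma(b)(\epsilon a)$ (using centrality of $\epsilon$) exhibits a common left multiple $m := \sigma(a)b = \sigma(b)(\epsilon a)$, and I would argue this is the right least common multiple: since $a,b$ are right coprime, $a$ and $b$ play the role of the "cofactors" $b_1,a_1$ in the theorem's notation. More precisely, the theorem's right lcm of $\sigma(a)$ and $\sigma(b)$ has the form $\sigma(a)b_1 = \sigma(b)a_1$ with $b_1,a_1$ right coprime; comparing with $\sigma(a)b = \sigma(b)(\epsilon a)$, and using that $\epsilon$ is invertible and central (so $b$ and $\epsilon a$ are right coprime exactly when $b$ and $a$ are), I would conclude $b_1 = b$ and $a_1 = \epsilon a$ up to the unit ambiguity in the lcm, or simply rescale so that these are the literal cofactors. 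The hypothesis $\sigma(a)x = \epsilon\sigma(b)y = \sigma(b)(\epsilon y)$ then reads, in the notation of Theorem \ref{20121124:thm2} applied to the pair $(\sigma(a),\sigma(b))$, as an equation of the form "$\sigma(a)\cdot x = \sigma(b)\cdot(\epsilon y)$", matching the hypothesis $ax=by$ there with $x \mapsto x$, $y\mapsto \epsilon y$.

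Applying Theorem \ref{20121124:thm2} to $\sigma(a),\sigma(b)$ (both left coprime, $\sigma(b)$ regular, with the central regular element $r$ and $r-1$ regular as assumed on $R$), with cofactors $b_1 = b$, $a_1 = \epsilon a$, I would obtain $z\in V$ with $x = b_1 z = bz$ and $\epsilon y = a_1 z = \epsilon a z$; since $\epsilon$ is invertible and central this gives $y = az$, which is exactly the desired conclusion. I should be a little careful about whether $\sigma(a)$ is regular (the theorem first reduces to that case internally, so it is not needed as a hypothesis — only $\sigma(b)$ regular is required), and about the unit ambiguity: the right lcm is only defined up to a right unit factor, so I may need to absorb a unit $u$ into $z$, replacing $z$ by $uz$, which does not affect the statement.

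The main obstacle I anticipate is the bookkeeping around the lcm identification: I must verify cleanly that $\sigma(a)b = \sigma(b)(\epsilon a)$ really is the \emph{least} common left multiple (not merely \emph{a} common multiple) and that the cofactors $b$ and $\epsilon a$ appearing are genuinely right coprime, so that Remark \ref{20121217:rem}(d) certifies $m = \sigma(a)b$ as the right lcm with those precise cofactors. This hinges on: (i) $a,b$ right coprime $\Rightarrow$ $b$ and $\epsilon a$ right coprime (clear, since $au+bv=1 \Rightarrow (\epsilon a)(\epsilon^{-1}u) + b v = 1$), and (ii) the matching of $\sigma(a)x = \sigma(b)(\epsilon y)$ with the theorem's hypothesis format. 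Once these are in place the deduction is a direct substitution, and the only remaining subtlety is tracking the central invertible $\epsilon$ through the final cancellation $y = \epsilon^{-1}(\epsilon a z) = az$.
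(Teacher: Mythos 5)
Your proposal is correct and follows essentially the same route as the paper's proof: transport regularity and coprimality through $\sigma$, identify $\sigma(a)b=\epsilon\sigma(b)a=\sigma(b)(\epsilon a)$ as the right least common multiple of $\sigma(a)$ and $\sigma(b)$ by comparing with the minimal common multiple and using right coprimality of $a$ and $b$ to make the connecting factor a unit, then apply Theorem \ref{20121124:thm2} and cancel the central unit $\epsilon$. The only cosmetic difference is that you phrase the lcm identification via coprimality of the cofactors $b$ and $\epsilon a$ rather than of $a$ and $b$ themselves, which amounts to the same computation.
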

\begin{proof}
First, since $b$ is regular and $\sigma$ is an anti-automorphism, $\sigma(b)$ is regular too.
Moreover, since by assumption $a$ and $b$ are right coprime
and $\sigma$ is an anti-automorphism,
it follows that $\sigma(a)$ and $\sigma(b)$ are left coprime.

We claim that the left least common multiple of $a$ and $b$
is equal to the right least common multiple of $\sigma(a)$ and $\sigma(b)$,
and it is given by $m=\sigma(a)b=\epsilon\sigma(b)a$.
Indeed, clearly $m$ is a common right multiple of $\sigma(a)$ and $\sigma(b)$.
It is therefore a right multiple of the minimal one: $m_1=\sigma(a)b_1=\sigma(b)a_1$.
Namely, there exists $q\in R$ such that 
$b=b_1q$ and $a=\epsilon^{-1}a_1q$.
But by assumption $a$ and $b$ are right coprime.
Hence, $q$ must be invertible, proving that $m$ is the right least common multiple 
of $\sigma(a)$ and $\sigma (b)$.
The same argument proves that $m$ is also the left least common multiple of $a$ and $b$.

We can now apply Theorem \ref{20121124:thm2}
to $\sigma(a)$ and $\sigma(b)$,
to deduce that there exists $z\in V$ such that $x=bz$ and $\epsilon y=\epsilon az$,
hence $y=az$.
\end{proof}


As in \cite{CDSK12b}, Corollary \ref{20121214:cor} implies the following maximal
isotropicity property important for the theory of Dirac structures, \cite{Dor93,DSK12a}.

\begin{corollary}\label{20121218:cor}
Let $R$ be as in Theorem \ref{20121124:thm2},
and let $V$ be a left $R$-module
and let $(\,\cdot\,,\,\cdot\,):\,V\times V\to A$
be a non-degenerate symmetric bi-additive pairing on $V$
with values in an abelian group $A$.
Let $*$ be an anti-involution of $R$ such that
$(ax,y)=(x,a^*y)$ for all $a\in R$ and $x,y\in V$.
Extend the pairing $(\,\cdot\,,\,\cdot\,)$ to a pairing $\langle\,\cdot\,|\,\cdot\,\rangle$ 
on $V\oplus V$ with values in $A$,
given by
$$
\langle x_1\oplus x_2 \,|\, y_1\oplus y_2\rangle=(x_1,y_2)+(x_2,y_1)\,,
$$
for every $x_1,x_2,y_1,y_2\in V$.
Given two elements $a,b\in R$, we consider the following additive subgroup of $V\oplus V$:
\begin{equation}\label{20121218:eq3}
\mc L_{a,b}=\Big\{bx\oplus ax\,\Big|\,x\in V\Big\}\subset V\oplus V\,.
\end{equation}
\begin{enumerate}[(a)]
\item
The subgroup $\mc L_{a,b}\subset V\oplus V$ is isotropic with respect 
to the pairing $\langle\,\cdot\,|\,\cdot\,\rangle$
if and only if 
$a^*b+b^*a$ acts as $0$ on $V$.
\item
If $b$ is regular,
$a$ and $b$ are right coprime,
and $a^*b+b^*a=0$,
then the subgroup $\mc L_{a,b}\subset V\oplus V$ is maximal isotropic.
\end{enumerate}
\end{corollary}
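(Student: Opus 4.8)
The plan is to handle the two parts in order, reducing everything to bilinear algebra and the "recovery" statement of Corollary \ref{20121214:cor}. For part (a), I would simply compute the pairing of two generic elements of $\mc L_{a,b}$: for $x,x'\in V$,
$$
\langle bx\oplus ax \,|\, bx'\oplus ax'\rangle=(bx,ax')+(ax,bx')=(x,b^*ax')+(x,a^*bx')=(x,(b^*a+a^*b)x')\,.
$$
Since the pairing $(\,\cdot\,,\,\cdot\,)$ is non-degenerate, this vanishes for all $x,x'\in V$ if and only if $(b^*a+a^*b)x'=0$ for all $x'$, i.e.\ $a^*b+b^*a$ acts as $0$ on $V$. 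This is the routine part; the only thing to be careful about is using the symmetry of $(\,\cdot\,,\,\cdot\,)$ correctly so that both cross-terms land on the same slot.

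For part (b), the task is to show $\mc L_{a,b}$ is not properly contained in any larger isotropic subgroup. So suppose $y_1\oplus y_2\in V\oplus V$ pairs to $0$ with every element of $\mc L_{a,b}$; I must show $y_1\oplus y_2\in\mc L_{a,b}$, i.e.\ there is $z\in V$ with $y_1=bz$, $y_2=az$. The orthogonality condition reads $0=\langle bx\oplus ax\,|\,y_1\oplus y_2\rangle=(bx,y_2)+(ax,y_1)=(x,b^*y_2+a^*y_1)$ for all $x\in V$; by non-degeneracy this gives $b^*y_2+a^*y_1=0$, that is, $a^*y_1=-b^*y_2=b^*(-y_2)$. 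Now I want to invoke Corollary \ref{20121214:cor} with the anti-automorphism $\sigma=*$, the invertible central element $\epsilon=-1$, and the roles of the pair taken by $a,b$: the hypothesis of that corollary that $\sigma(a)b=\epsilon\sigma(b)a$ is exactly $a^*b=-b^*a$, which is our assumption $a^*b+b^*a=0$; and the relation $\sigma(a)x=\epsilon\sigma(b)y$ becomes $a^*y_1=-b^*(-y_2)$ — so here I apply it with "$x$" $=y_1$ and "$y$" $=-y_2$, which satisfy $a^*y_1=\epsilon b^*(-y_2)$ precisely when $a^*y_1=b^*y_2$. Hence I must double-check the sign bookkeeping: rewriting $b^*y_2+a^*y_1=0$ as $a^*y_1=\epsilon b^*y_2$ with $\epsilon=-1$ directly matches Corollary \ref{20121214:cor} with its "$x$"$=y_1$, "$y$"$=y_2$, since there $\sigma(a)x=\epsilon\sigma(b)y$ means $a^*y_1=-b^*y_2$, which is what we have. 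Then Corollary \ref{20121214:cor} (whose remaining hypotheses — $b$ regular, $a,b$ right coprime — are assumed) yields $z\in V$ with $y_1=bz$ and $y_2=az$, i.e.\ $y_1\oplus y_2=bz\oplus az\in\mc L_{a,b}$, as desired.

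The main obstacle, such as it is, is purely organizational: getting the sign conventions to line up so that the identity $a^*b+b^*a=0$ is read as the hypothesis $\sigma(a)b=\epsilon\sigma(b)a$ with $\epsilon=-1$, and the vanishing $a^*y_1+b^*y_2=0$ coming from isotropy is read as $\sigma(a)x=\epsilon\sigma(b)y$ with the correct choice of $x,y$. No genuinely new input is needed beyond Corollary \ref{20121214:cor}; the requirement there that $R$ contain a central regular $r$ with $r-1$ regular (inherited from Theorem \ref{20121124:thm2}) is covered by the hypothesis "$R$ as in Theorem \ref{20121124:thm2}." One small point worth a remark in passing: part (a) shows $\mc L_{a,b}$ is isotropic under the hypothesis $a^*b+b^*a=0$, so in part (b) isotropy is automatic and only maximality needs proof.
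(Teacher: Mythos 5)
Your proposal is correct and follows exactly the route the paper intends: part (a) is the direct computation $\langle bx\oplus ax\,|\,bx'\oplus ax'\rangle=(x,(a^*b+b^*a)x')$ plus non-degeneracy, and part (b) is an application of Corollary \ref{20121214:cor} with $\sigma(a)=a^*$ and $\epsilon=-1$, which is precisely what the paper's one-line proof says. The sign bookkeeping you settle on ($x=y_1$, $y=y_2$, so that $a^*y_1=-b^*y_2$ matches $\sigma(a)x=\epsilon\sigma(b)y$) is the right one.
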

\begin{proof}
Part (a) is straightforward
and part (b) follows immediately from Corollary \ref{20121214:cor}
with $\sigma(a)=a^*$ and $\epsilon=-1$.
\end{proof}


\begin{corollary}\label{20121214:cor2}
Let $R$ be as in Theorem \ref{20121124:thm2},
and let $V$ be a left $R$-module.
Let $a,b\in R$, with $b$ regular, be left coprime.
Let $m=ab_1=ba_1$ be their right least common multiple.
Then $\ker b=a_1(\ker b_1)$.
\end{corollary}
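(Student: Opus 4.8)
The plan is to derive this directly from Theorem \ref{20121124:thm2}, observing that the hypotheses assumed here (the ring $R$ together with the central regular element $r$ such that $r-1$ is regular, the left $R$-module $V$, and the elements $a,b$ with $b$ regular and left coprime, with right least common multiple $m=ab_1=ba_1$) are exactly the standing hypotheses of that theorem. So the corollary should require essentially no new work, only the right specialization of $x,y$.

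First I would record the inclusion $a_1(\ker b_1)\subseteq\ker b$, which uses nothing beyond the identity $m=ab_1=ba_1$: if $z\in\ker b_1$, i.e. $b_1z=0$, then $b(a_1z)=mz=a(b_1z)=0$, so $a_1z\in\ker b$. For the reverse inclusion, take $x\in\ker b$, so that $bx=0=a\cdot 0$. This is precisely the relation needed to invoke Theorem \ref{20121124:thm2}, applied to the pair $(0,x)\in V\times V$ substituted into the slots $(x,y)$ of that theorem. The theorem then yields an element $z\in V$ with $0=b_1z$ and $x=a_1z$; equivalently $z\in\ker b_1$ and $x\in a_1(\ker b_1)$. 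Combining the two inclusions gives $\ker b=a_1(\ker b_1)$.

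The one point that must be watched — and it is the only subtlety — is the ordering of the arguments: one feeds Theorem \ref{20121124:thm2} the pair $(0,x)$ rather than $(x,0)$, because the relation we actually have is $a\cdot 0=b\cdot x$, and it is the theorem's conclusion about the \emph{second} argument, $x=a_1z$, that produces the desired description of $\ker b$ as $a_1(\ker b_1)$. Beyond this bookkeeping there is no real obstacle; the entire content is already contained in Theorem \ref{20121124:thm2}, and in particular the hypothesis on the central element $r$ is needed only through that theorem.
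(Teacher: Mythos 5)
Your proof is correct and coincides with the paper's own argument: the easy inclusion $a_1(\ker b_1)\subseteq\ker b$ from $ab_1=ba_1$, and the reverse inclusion by applying Theorem \ref{20121124:thm2} to the pair $(0,k)$ with $a\cdot 0=0=bk$. The remark about which slot receives $0$ is exactly the substitution the paper makes.
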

\begin{proof}
If $k_1\in\ker b_1$, then $b(a_1k_1)=ab_1k_1=0$.
Therefore, $a_1(\ker b_1)\subset\ker b$.
We need to prove the opposite inclusion.
If $k\in\ker b$, we have $a0=0=bk$.
Hence, by Theorem \ref{20121124:thm2},
there exists $z\in V$ such that $0=b_1z$ and $k=a_1z$.
Namely, $k\in a_1(\ker b_1)$.
\end{proof}
\begin{remark}\label{20121214:rem}
In the ring $\mc R=\Mat_{\ell\times\ell}\mc K[\partial]$ of $\ell\times\ell$ matrix differential operators
over a differential field $\mc K$,
the above Corollary \ref{20121214:cor2} implies that
if $b^{-1}a=a_1b_1^{-1}$ is a rational matrix pseudodifferential operator
in its minimal left and right fractional decompositions,
then $\deg(b)=\deg(b_1)$
(where $\deg(b)$ is the degree of the Dieudonn\'e determinant of $b$).
Indeed, the fractional decomposition $b^{-1}a$ being minimal 
means that $a$ and $b$ are left coprime.
Hence, by Corollary \ref{20121214:cor2}
we have that $\dim(\ker b)=\dim(a_1\ker b_1)$
in any differential field extension of $\mc K$.
Moreover, the fractional decomposition $a_1b_1^{-1}$ being minimal 
means that $\ker a_1\cap\ker b_1=0$ in any differential field extension of $\mc K$.
The claim follows by the fact that $\deg b$ is equal to the dimension
of $\ker b$ in the linear closure of $\mc K$ \cite{CDSK12b}.
\end{remark}



\end{document}